%
%
%
\documentclass[12pt]{amsart}
\usepackage{amssymb, colordvi}
\usepackage{multirow}
\usepackage{graphicx}

%
%
\headheight=8pt       \topmargin=30pt
\textheight=621pt     \textwidth=456pt
\oddsidemargin=6pt   \evensidemargin=6pt

\newtheorem{theorem}{Theorem}
\newtheorem{proposition}[theorem]{Proposition}
\newtheorem{lemma}[theorem]{Lemma}

\newtheorem{ex}[theorem]{Example}
\newtheorem{rem}[theorem]{Remark}

\newcounter{FNC}[page]
\def\fauxfootnote#1{{\addtocounter{FNC}{2}$^\fnsymbol{FNC}$%
     \let\thefootnote\relax\footnotetext{$^\fnsymbol{FNC}$#1}}}


\renewcommand{\P}{\mathbb{P}}
\newcommand{\C}{\mathbb{C}}
\newcommand{\R}{\mathbb{R}}
\newcommand{\Z}{\mathbb{Z}}

\newcommand{\calA}{\mathcal{A}}

\newcommand{\calW}{\mathcal{W}}
\title{Bounds on the number of real solutions to polynomial equations} 

\author{Daniel J.~Bates}
\address{Institute for Mathematics and its Applications\\
         University of Minnesota\\
         114 Lind Hall\\
         207 Church Street S.E.\\
         Minneapolis, MN 55455-0436\\
         USA}

\email{dbates1@nd.edu}
\urladdr{http://www.nd.edu/~dbates1/}

\author{Fr\'ed\'eric Bihan}
\address{Laboratoire de Math\'ematiques\\
         Universit\'e de Savoie\\
         73376 Le Bourget-du-Lac Cedex\\
         France}

\email{Frederic.Bihan@univ-savoie.fr}
\urladdr{http://www.lama.univ-savoie.fr/\~{}bihan/}

\author{Frank Sottile}
\address{Department of Mathematics\\
         Texas A\&M University\\
         College Station\\
         Texas \ 77843\\
         USA}
\email{sottile@math.tamu.edu}
\urladdr{http://www.math.tamu.edu/\~{}sottile/}

\thanks{Bates and Sottile supported by the Institute for Mathematics and its Applications.}
\thanks{Sottile supported by the NSF CAREER grant DMS-0538734.}  

\keywords{sparse polynomial system, hyperplane arrangement, fewnomial}

\subjclass[2000]{14M25, 14P25, 52C35}

\begin{document}

\begin{abstract}
 We use Gale duality for complete intersections and adapt the 
 proof of the fewnomial bound for positive solutions to obtain the bound
\[
   \frac{e^{\Red{4}}+3}{4}2^{\binom{k}{2}}n^k
\]
 for the number of non-zero real solutions to a system of $n$ polynomials 
 in $n$ variables having $n{+}k{+}1$ monomials whose exponent vectors generate
 a subgroup of $\Z^n$ of odd index.
 This bound only exceeds the bound for positive solutions
 by the constant factor $(e^4+3)/(e^2+3)$ and it is 
 asymptotically sharp for $k$ fixed and $n$ large.
\end{abstract}
\maketitle

%
\section*{Introduction}
%
In~\cite{BBS}, the sharp bound of $2n{+}1$ was obtained for the number of 
non-zero real solutions to a system of $n$ polynomial equations in $n$ 
variables having $n{+}2$ monomials whose exponents affinely span the lattice $\Z^n$.
In~\cite{Bi07}, the sharp bound of $n{+}1$ was given for the positive solutions
to such a system of equations.
This last bound was generalized in~\cite{BS07}, which showed that the 
number of positive solutions to a system of $n$ polynomial equations in $n$
variables having $n{+}k{+}1$ monomials was less than 
\[
   \frac{e^2+3}{4} 2^{\binom{k}{2}}n^k\,,
\]
which is asymptotically sharp for $k$ fixed and $n$ large~\cite{BRS07a}.
This dramatically improved Khovanskii's fewnomial bound~\cite{Kh80} 
of $2^{\binom{n+k}{2}}(n+1)^{n+k}$.

We give a bound for all non-zero real solutions.
Under the assumption that the exponent vectors $\calW$ span a subgroup of $\Z^n$ of odd  
index, we show that the number of non-degenerate non-zero real solutions
to a system of polynomials with support $\calW$ is less than 
 \begin{equation}\label{Eq:New_bound}
   \frac{e^{\Red{4}}+3}{4} 2^{\binom{k}{2}}n^k\ .
 \end{equation}
The novelty is that this bound exceeds the bound for solutions in the positive
orthant by a fixed constant factor $(e^4+3)/(e^2+3)$, rather than by a factor of $2^n$,
which is the number of orthants.
By the construction in~\cite{BRS07a}, it is asymptotically sharp for $k$ fixed and $n$
large. 

We follow the outline of~\cite{BS07}---we use Gale duality for real
complete intersections~\cite{BS_Gale} and then bound the number of solutions to the dual
system of master functions.
The key idea is that including solutions in all chambers in a complement of
an arrangement of hyperplanes in $\R\P^k$, rather than in just one chamber as
in~\cite{BS07}, does not increase our estimate on the number of solutions very much. 
This was discovered while implementing a numerical continuation algorithm for computing
the positive solutions to a system of polynomials~\cite{BaSo}. 
That algorithm was improved by this discovery to one which finds all real solutions.
It does  so without computing complex solutions and is based on~\cite{BS07} and the
results of this paper.
Its complexity depends on~\eqref{Eq:New_bound}, and not on the number of
complex solutions. 

We state our main theorem in Section 1 and then use Gale duality to reduce it to a
statement about systems of master functions, which we prove in Section 2.

%
%
%
\section{Gale duality for systems of sparse polynomials}

Let $\calW=\{w_0=0,w_1,\dotsc,w_{n{+}k}\}\subset\Z^n$  be a collection of $n{+}k{+}1$
integer vectors ($|\calW|=n{+}k{+}1$), which correspond to monomials in variables
$x_1,\dotsc,x_n$. 
A (Laurent) polynomial $f$ with \Blue{{\it support $\calW$}} is a real linear
combination of monomials with exponents from $\calW$,
 \begin{equation}\label{Eq:sparse_poly}
     f(x_1,\dotsc,x_n)\ =\ \sum_{i=0}^{n+k} c_i x^{w_i}
      \qquad\mbox{with}\ c_i\in\R\,.
 \end{equation}
A \Blue{{\it system with support $\calW$}} is a system of polynomial equations
 \begin{equation}\label{Eq:sparse_system}
   f_1(x_1,\dotsc,x_n)\ =\  f_2(x_1,\dotsc,x_n)
    \ =\ \dotsb \ =\ f_n(x_1,\dotsc,x_n)\ =\ 0\,,
 \end{equation}
where each polynomial $f_i$ has support $\calW$.
Since multiplying every polynomial in~\eqref{Eq:sparse_system} by a monomial 
$x^\alpha$ does not change the set of non-zero solutions but translates $\calW$ by the 
vector $\alpha$, we see that it was no loss of generality to assume that $0\in\calW$.

The system~\eqref{Eq:sparse_system} has infinitely many solutions if $\calW$ does not span
$\R^n$.
We say that $\calW$ \Blue{{\it spans $\Z^n$ mod $2$}} if the $\Z$-linear span of $\calW$
is a subgroup of $\Z^n$ of odd index.

\begin{theorem}\label{Thm:one}
 Suppose that $\calW$ spans $\Z^n$ mod $2$ and $|\calW|=n{+}k{+}1$.
 Then there are fewer than~$\eqref{Eq:New_bound}$ non-degenerate non-zero real solutions 
 to a sparse system~$\eqref{Eq:sparse_system}$ with support $\calW$.
\end{theorem}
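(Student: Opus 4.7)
The plan follows the two-step strategy outlined in the introduction. First, I would invoke Gale duality for real complete intersections \cite{BS_Gale} to translate the sparse system~\eqref{Eq:sparse_system} into an equivalent dual system of master function equations on the complement of an arrangement $\calA$ of $n{+}k{+}1$ hyperplanes in $\R\P^k$ (one hyperplane per element of $\calW$). The hypothesis that $\calW$ spans $\Z^n$ modulo~$2$ is precisely what is needed in order for this correspondence to be a bijection between non-zero \emph{real} solutions of~\eqref{Eq:sparse_system} and solutions of the dual system on the \emph{entire} complement $\R\P^k\setminus\calA$. Without the odd-index assumption the map from the real torus $(\R^\times)^n$ to the dual picture would either fail to be surjective or fail to be injective, corrupting the count.

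Next, I would bound the number of real solutions of the dual master function system in $\R\P^k\setminus\calA$ by adapting the iterated Khovanskii--Rolle argument of~\cite{BS07}. That argument inducts on the number of equations: at each step one replaces one defining equation by a Jacobian equation, uses a Rolle-type inequality to control how many new zeros appear, and bounds a boundary contribution via the degree of an associated Gauss map. In \cite{BS07} this induction was carried out inside a single chamber of~$\calA$ and produced the constant $(e^2+3)/4$ in front of $2^{\binom{k}{2}}n^k$. Here I would run the same induction but with the underlying region being the full complement $\R\P^k\setminus\calA$, so that all chambers contribute simultaneously.

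The heart of the proof, and the step I expect to be the real obstacle, is the bookkeeping of constants in this chamber-free induction. A naive chamber-by-chamber application of the \cite{BS07} bound would multiply the constant by up to $2^n$ (the approximate number of chambers), which would wreck the estimate. The key observation, discovered in the course of developing the numerical continuation algorithm~\cite{BaSo}, should be that the auxiliary curves produced at each Khovanskii--Rolle step are not confined to one chamber: their connected components may cross hyperplanes of~$\calA$, and so the Rolle-type inequality can be applied globally, trading the factor~$2^n$ for a constant-factor overhead. Carrying this accounting through the recursion, one expects the convergent series producing $e^2$ in~\cite{BS07} to be replaced by a closely related series summing to~$e^4$, yielding the inflation factor $(e^4+3)/(e^2+3)$ relative to the positive-solution bound. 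Composing the resulting master-function estimate with the Gale duality dictionary set up in the first step then recovers exactly the bound~\eqref{Eq:New_bound}.
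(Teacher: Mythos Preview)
Your two-step strategy---Gale duality followed by a global Khovanskii--Rolle induction over all of $M_\calA$---is exactly the paper's approach, and your prediction that the series summing to $e^2$ is replaced by one summing to $e^4$ is on the nose: the extra factor of $2^j$ at step~$j$ turns $\sum 2^j/j!$ into $\sum 4^j/j!$.

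However, the mechanism you offer for avoiding the $2^n$ blowup is incorrect and would not work as stated. The auxiliary curves $C_j$ live in the hyperplane complement $M_\calA$, whose connected components \emph{are} the chambers; a connected component of $C_j$ therefore cannot ``cross hyperplanes of~$\calA$'', and no global Rolle argument exploits such crossings. The actual saving comes from how one bounds $\flat(C_j)$, the number of unbounded components. In~\cite{BS07} this is done by counting limit points of the closure of $C_j$ on the extended arrangement $\calA^+$: these limit points lie on codimension-$j$ faces, and a B\'ezout estimate on each face bounds their total number by $2^{\binom{k-j}{2}}n^{k-j}\binom{n+k+1}{j}$. The key observation here is local: in a general arrangement, each point on a codimension-$j$ face is incident to exactly $2^j$ chambers, and $C_j$ has one branch in each of them. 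So the global count of ends exceeds the single-chamber estimate by a factor of $2^j$, not by the number of chambers. Replace your ``curves cross hyperplanes'' picture with this local $2^j$-branch count at boundary points and the rest of your outline goes through verbatim.
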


The importance of this bound for the number of real solutions is that it has a completely
different character than Kouchnirenko's bound for the number of complex solutions.

\begin{proposition}[Kouchnirenko~\cite{BKK}]
 The number of non-degenerate solutions in $(\C^\times)^n$ to a
 system~$\eqref{Eq:sparse_system}$ with support $\calW$ is no more than
 $n!\mbox{\rm vol}(\mbox{\rm conv}(\calW))$.
\end{proposition}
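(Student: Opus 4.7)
The plan is to recognize Kouchnirenko's bound as the equal-support case of Bernstein's mixed-volume theorem and then reduce from arbitrary systems to a tractable generic situation by semicontinuity. When Newton polytopes $P_1,\ldots,P_n$ are attached to supports $\calA_1,\ldots,\calA_n\subset\Z^n$, Bernstein's theorem asserts that a generic sparse system has exactly $\mathrm{MV}(P_1,\ldots,P_n)$ solutions in $(\C^\times)^n$, and multilinearity of mixed volume gives $\mathrm{MV}(P,\ldots,P)=n!\,\mathrm{vol}(P)$. Since in our setting every $f_i$ has support $\calW$, we have $P_1=\cdots=P_n=P:=\mathrm{conv}(\calW)$, so the target count is precisely $n!\,\mathrm{vol}(P)$.

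To prove Bernstein's theorem in this square case, I would work on the projective toric variety $X_P$ attached to $P$. After clearing a common monomial if $0\notin P$, each $f_i$ extends to a section of the ample line bundle $L_P$ on $X_P$, and the standard intersection-theoretic identity for toric varieties yields $(L_P)^n = n!\,\mathrm{vol}(P)$. For generic coefficients, the sections $f_1,\ldots,f_n$ cut out a reduced zero-dimensional subscheme of $X_P$, so Bezout on $X_P$ counts all its zeros, inside and outside the open torus $(\C^\times)^n\subset X_P$, as exactly $(L_P)^n$. In particular the number of zeros in the open torus is at most $n!\,\mathrm{vol}(P)$.

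For an arbitrary system with support $\calW$ having $N$ non-degenerate solutions in $(\C^\times)^n$, I would perturb its coefficients along a one-parameter family to a generic system with the same support $\calW$. Each non-degenerate zero is a simple root of the map $(f_1,\ldots,f_n):(\C^\times)^n\to\C^n$, so by the implicit function theorem it deforms uniquely to a non-degenerate torus zero of the perturbed system for small parameter, and no two nearby roots can merge. The count of non-degenerate torus zeros is therefore lower semicontinuous in the coefficients, giving $N\le n!\,\mathrm{vol}(P)$.

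The main obstacle is pinning down the exact generic count. A clean substitute for the bare toric intersection computation is a polyhedral deformation: choose a generic real-valued lift $h:\calW\to\R$ inducing a regular triangulation $T$ of $P$, replace $f_i$ by $\sum_{w\in\calW}c_{i,w}\,t^{h(w)}\,x^w$, and let $t\to 0$. A Puiseux-series analysis shows that each full-dimensional simplex of $T$ with normalized volume $v$ contributes exactly $v$ torus solutions in the limit, and these sum to $n!\,\mathrm{vol}(P)$. The delicate point is showing that no solutions escape to the toric boundary strata of $X_P$ along the deformation, which follows from the genericity of $h$ and the coefficients via an initial-form argument on each face of $P$.
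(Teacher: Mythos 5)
The paper does not prove this proposition at all: it is stated as a classical result and attributed to Kouchnirenko with a citation to [BKK], so there is no in-paper argument to compare against. Your reconstruction is a correct sketch of the standard proof. The two-step structure is right: (i) for generic coefficients the count in $(\C^\times)^n$ is exactly $n!\,\mathrm{vol}(\mathrm{conv}(\calW))$, and (ii) non-degenerate torus zeros persist and stay distinct under small perturbation of coefficients, so their number is lower semicontinuous and the generic count is an upper bound for every system with support $\calW$ --- this cleanly handles systems that also have degenerate or positive-dimensional solution components. For step (i), the toric computation $(L_P)^n=n!\,\mathrm{vol}(P)$ is standard; the one point you should make explicit is why the generic member of the family avoids the toric boundary and meets properly. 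This follows because every vertex of $P=\mathrm{conv}(\calW)$ lies in $\calW$, so on each boundary orbit of $X_P$ some section in your linear system is non-vanishing; hence the system is base-point-free and Bertini gives a reduced, zero-dimensional intersection avoiding the boundary and the singular locus, which is what lets you say the B\'ezout count on $X_P$ equals the torus count. Your polyhedral-homotopy alternative for step (i) is also standard (Huber--Sturmfels), and you correctly flag its delicate point. In short: nothing is wrong, but since the paper treats this as a black box, your contribution is an (essentially correct, standard) proof sketch rather than a divergence from the paper's method.
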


Here, $\mbox{\rm vol}(\mbox{\rm conv}(\calW))$ is the Euclidean volume of the convex
hull of $\calW$. 

Perturbing coefficients of the polynomials in~\eqref{Eq:sparse_system} so that
they define a complete intersection in $(\C^\times)^n$
can only increase the number of non-degenerate solutions.
Thus it suffices to prove Theorem~\ref{Thm:one} under this assumption.
Such a complete intersection is equivalent to a complete intersection of 
master functions in a hyperplane complement~\cite{BS_Gale}.

Let $\R^{n+k}$ have coordinates $z_1,\dotsc,z_{n+k}$. 
A polynomial~\eqref{Eq:sparse_poly} with support $\calW$ is the pullback
$\Phi_\calW^*(\Lambda)$ of the degree 1 polynomial
$\Lambda:= c_0+c_1z_1+\dotsb+c_{n+k}z_{n+k}$ 
along the map 
\[
   \Phi_\calW\ \colon\ (\R^\times)^n\ni x\ \longmapsto\ 
    (x^{w_i}\mid i=1,\dotsc,n{+}k)\in \R^{n+k}\,.
\]
If we let $\Lambda_1,\dotsc,\Lambda_n$ be the degree 1 polynomials which pull back 
to the polynomials in the system~\eqref{Eq:sparse_system}, then they cut out an affine
subspace $L$ of $\R^{n+k}$ of dimension $k$.

Let $\{p_i\mid i=1,\dotsc,n{+}k\}$ be degree 1 polynomials
on $\R^k$ which induce an isomorphism  between $\R^k$ and $L$, 
 \[
   \Psi_p\ \colon\  \R^k\ni y\ \longmapsto\ (p_1(y),\dotsc,p_{n+k}(y))\in
   L\subset\R^{n+k}\,.
 \]
Let $\calA\subset\R^k$ be the arrangement of hyperplanes defined by the vanishing of the 
$p_i(y)$.
This is the pullback along $\Psi_p$ of the coordinate hyperplanes of $\R^{n+k}$.

The image $\Phi_\calW((\R^\times)^n)$ inside of the torus $(\R^\times)^{n{+}k}$
has equations
\[
   z^{\beta_1}\ =\ z^{\beta_2}\ =\ \dotsb\ =\ z^{\beta_k}\ =\ 1\,,
\]
where the \Blue{{\it weights}} $\{\beta_1,\dotsc,\beta_k\}$ form a basis for the
$\Z$-submodule of $\Z^{n+k}$ of linear relations among the vectors $\calW$.
To these data, we associate a system of 
\Blue{{\it master functions}} on the complement $M_\calA$ of
the arrangement $\calA$ of $\R^k$,
 \begin{equation}\label{Eq:Master}
  p(y)^{\beta_1}\ =\ p(y)^{\beta_2}\ =\ \dotsb\ =\ p(y)^{\beta_k}\ =\ 1\,.
 \end{equation}
Here, if $\beta=(b_1,\dotsc,b_{n+k})$ then 
$p^\beta:=p_1(y)^{b_1}\dotsb p_{n{+}k}(y)^{b_{n{+}k}}$.

A basic result of~\cite{BS_Gale} is that if $\calW$ spans $\Z^n$ modulo 2 and
either of the systems~\eqref{Eq:sparse_system} or~\eqref{Eq:Master} defines a complete
intersection, then the other defines a complete intersection and the maps $\Phi_\calW$ and
$\Psi_p$ induce isomorphisms between the two solution sets, as 
analytic subschemes of $(\R^\times)^n$ and $M_\calA$.
Since we assumed that the system~\eqref{Eq:sparse_system} is general, these
hypotheses hold and the arrangement is \Blue{{\it essential}} in that the 
polynomials $p_i$ span the space of all degree 1 polynomials on $\R^k$.

\begin{theorem}
 A system~$\eqref{Eq:Master}$ of master functions in the complement 
 of an essential arrangement of $n{+}k$ hyperplanes in $\R^k$ 
 has at most~$\eqref{Eq:New_bound}$ non-degenerate real solutions.
\end{theorem}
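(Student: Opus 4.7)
The plan is to follow the Khovanskii--Rolle strategy of~\cite{BS07}, but applied on the entire hyperplane complement $M_\calA$ instead of a single chamber.

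First I would recast the master-function system~\eqref{Eq:Master} in logarithmic form. For $j=1,\dotsc,k$, set
\[
  \varphi_j(y)\ :=\ \sum_{i=1}^{n+k} b_{j,i}\log|p_i(y)|,
\]
where $\beta_j=(b_{j,1},\dotsc,b_{j,n+k})$. These functions are smooth on $M_\calA$, and every non-degenerate real solution to~\eqref{Eq:Master} is a non-degenerate zero of the system $\varphi_1=\dotsb=\varphi_k=0$ (the converse can fail when some $p^{\beta_j}=-1$, but this only inflates the count and is harmless for an upper bound). It therefore suffices to bound the number of non-degenerate zeros of the system of $\varphi_j$'s on $M_\calA$.

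Next I would execute a $k$-step Khovanskii--Rolle recursion. At step $\ell$, one replaces $\varphi_\ell$ by (the numerator of) a Jacobian-type polynomial whose vanishing captures the critical locus of $\varphi_\ell$ on the curve $C_\ell:=\{\varphi_{\ell+1}=\dotsb=\varphi_k=0\}$; Rolle's theorem then says that on each connected arc of $C_\ell$ in $M_\calA$, the number of zeros of $\varphi_\ell$ is at most one more than the number of critical points. After $k$ such reductions the system becomes purely polynomial, and B\'ezout on the resulting tower of Jacobians produces the dominant factor $2^{\binom{k}{2}}n^k$, exactly as in~\cite{BS07}. The constant in front is accumulated from the endpoint contributions at each step of the recursion.

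The main obstacle is bookkeeping for the endpoints of arcs. Each arc of $C_\ell$ in $M_\calA$ has two ends, each of which either goes to infinity or runs into some hyperplane $\{p_i=0\}$ of $\calA$. In the single-chamber setting of~\cite{BS07} an arc can only touch a given hyperplane from one side and can only escape to infinity through the unbounded faces of that chamber; on the full complement $M_\calA$ arcs accumulate at every hyperplane from \emph{both} sides. My expectation is that this essentially doubles the endpoint contribution at each Rolle step while leaving the arrangement-geometric ingredients (number of hyperplanes, degrees of the Jacobians, number of arcs per chamber) unchanged; unwinding the same recurrence as in~\cite{BS07} with the doubled endpoint term then replaces the $e^{2}$ in the leading constant $(e^{2}+3)/4$ by $e^{4}$, yielding~\eqref{Eq:New_bound}. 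The technical heart of the argument, and the point that must be checked carefully, is precisely that this extra contribution remains only a fixed multiplicative constant and is not silently multiplied by $2^n$ (the number of orthants in $(\R^\times)^n$) or by the $O(n^k)$ chamber count of $\calA$ --- this is exactly the phenomenon advertised in the introduction.
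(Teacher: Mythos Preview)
Your outline is the paper's approach: take logarithms, run the Khovanskii--Rolle recursion of~\cite{BS07} over all of $M_\calA$ rather than a single chamber, and show that the endpoint terms pick up only a bounded factor, turning $e^2$ into $e^4$. The gap is in the step you yourself flag as the crux.

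Your heuristic---arcs meet each hyperplane ``from both sides,'' doubling the endpoint count at each step---misidentifies the geometry and does not actually produce $e^4$. The curve at step $j$ is $C_j=V(\varphi_1,\dotsc,\varphi_{j-1},\Gamma_{j+1},\dotsc,\Gamma_k)$ (your $C_\ell$ as written has dimension $\ell$, not $1$: the Jacobians from earlier steps are missing), and the $j{-}1$ logarithmic constraints force every end of $C_j$ on $\calA^+$ to land on a codimension-$j$ face, not on a single hyperplane. At such a point a generic arrangement has exactly $2^j$ incident chambers, and $C_j$ has one branch in each. Hence the single-chamber estimate $\flat(\widetilde{C}_j)\le\tfrac{1}{2}2^{\binom{k-j}{2}}n^{k-j}\tbinom{n+k+1}{j}$ acquires a factor of $2^j$, giving
\[
  \flat(C_j)\ \le\ \tfrac{1}{2}\,2^{\binom{k-j}{2}}n^{k-j}\tbinom{n+k+1}{j}\cdot 2^j
  \ \le\ \tfrac{1}{2}\,2^{\binom{k}{2}}n^k\cdot\frac{2^{2j-1}}{j!}\,,
\]
and it is $\tfrac{1}{4}\sum_{j} 4^j/j!<\tfrac{1}{4}(e^4-1)$ that yields the constant in~\eqref{Eq:New_bound}. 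A uniform factor of $2$, which is what ``both sides of a hyperplane'' would give, only doubles the positive-chamber bound and does not reach $e^4$; the correct observation is that the local chamber count $2^j$ at a codimension-$j$ face grows with $j$ but is independent of $n$---and that is precisely why the extra cost is a fixed constant rather than $2^n$ or the full chamber count.
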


We actually prove a bound for a more general system than~\eqref{Eq:Master},
namely for
\[
  p(z)^{2\beta_1}\ =\ p(z)^{2\beta_2}\ =\ \dotsb\ =\ p(z)^{2\beta_k}\ =\ 1\,.
\]
We write this more general system as 
 \begin{equation}\label{Eq:gen_Master}
  |p(z)|^{\beta_1}\ =\ |p(z)|^{\beta_2}\ =\ \dotsb\ =\ |p(z)|^{\beta_k}\ =\ 1\,.
 \end{equation}
In a system of this form we may have real number weights
$\beta_i\in\R^{n+k}$. 
We give the strongest form of our theorem.

\begin{theorem}\label{Thm:ultimate}
  A system of the form~$\eqref{Eq:gen_Master}$ with real weights $\beta_i$ in 
  the complement of an essential arrangement of $n{+}k$ hyperplanes in $\R^k$ 
 has at most~$\eqref{Eq:New_bound}$ non-degenerate real solutions.
\end{theorem}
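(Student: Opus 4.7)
The plan is to extend the Khovanskii--Rolle iteration used in~\cite{BS07} so that it operates on the entire hyperplane complement $M_\calA$ at once, rather than on a single chamber of $\calA$, while inflating the final estimate by only a constant factor.

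First, convert the system to logarithmic form. On $M_\calA$ the condition $|p(z)|^{\beta_i}=1$ is equivalent to the vanishing of the real-analytic function
\[
  g_i(z)\ :=\ \sum_{j=1}^{n+k}\beta_{i,j}\log|p_j(z)|,\qquad i=1,\ldots,k.
\]
After a generic perturbation of the weights $\beta_i$ and of the coefficients of the $p_j$, which can only increase the number of non-degenerate solutions, the zero locus $C_{k-j}:=\{g_1=\cdots=g_{k-j}=0\}$ is a smooth real-analytic submanifold of $M_\calA$ of dimension~$j$ for each $1\le j\le k{-}1$; in particular $C_{k-1}$ is a smooth curve.

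I would then apply the Khovanskii--Rolle theorem inductively. The number of zeros of $g_k$ on $C_{k-1}$ is bounded by the number of unbounded components of $C_{k-1}$ in $M_\calA$, plus the number of zeros on $C_{k-1}$ of an appropriate Jacobian. Clearing denominators in the Jacobian turns the latter into a polynomial equation whose zeros on $C_{k-1}$ are the solutions of a new system of the shape~\eqref{Eq:gen_Master}, but with one transcendental equation replaced by a polynomial one. Iterating $k$ times reduces the problem to counting real solutions of a polynomial system on a hyperplane complement, where a Bezout-type estimate applies.

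The main new ingredient, and the hardest step, is the count of unbounded components of each $C_{k-j}$ inside the full complement $M_\calA$ rather than in a single chamber. An unbounded component either escapes to infinity in $\R\P^k$ or approaches some hyperplane $H_j=\{p_j=0\}$ of $\calA$; in the latter case the divergence of $\log|p_j|$ forces the end to be trapped in a lower-dimensional master-function problem on $H_j$, controlled by the induced arrangement of $n+k{-}1$ hyperplanes in $H_j\cong\R^{k-1}$. The key observation is that working globally on $M_\calA$ inflates the single-chamber end-count of~\cite{BS07} by only a factor of about~$2$ per iteration, rather than by the factor $2^n$ that one would get from summing over orthants. Feeding these enlarged end-counts into the recursion of~\cite{BS07} yields a sum of the same form but with each term doubled: what was bounded there by $(e^{2}{+}3)/4$ is now bounded by $(e^{4}{+}3)/4$, while the factors $2^{\binom{k}{2}}$ and $n^k$ arise from the combinatorics of the iteration and remain unchanged. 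The main obstacle is precisely this global end-count: one must show that the ends of a transcendental real-analytic curve, as it moves among the chambers of $\calA$, can be controlled by the combinatorics of $\calA$ without paying a factor exponential in $n$.
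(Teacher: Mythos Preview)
Your overall plan---logarithmic reformulation, Khovanskii--Rolle iteration on all of $M_\calA$, and an end-count that inflates the single-chamber estimate by only a bounded factor---matches the paper's approach, and you correctly identify that the inflation is ``about $2$ per iteration'' (precisely $2^j$ at step $j$), which is exactly what turns $e^2$ into $e^4$.

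There are two genuine gaps. First, the curves that arise in the iteration are not the $C_{k-j}=\{g_1=\cdots=g_{k-j}=0\}$ you define; after the first Khovanskii--Rolle step the last equation $g_k$ is replaced by a polynomial Jacobian $\Gamma_k$, and at the $j$th step the relevant curve is
\[
  C_j\ =\ V(g_1,\dotsc,g_{j-1},\;\Gamma_{j+1},\dotsc,\Gamma_k),
\]
a mix of $j{-}1$ logarithmic and $k{-}j$ polynomial equations. This matters for the end analysis.

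Second, and more importantly, your justification for the factor $2^j$ is not the right mechanism. You say an end approaches a single hyperplane $H_\ell$ and reduces to a master-function problem there, but that would give only a factor of $2$ and does not explain $2^j$; nor is it clear how a recursion on $H_\ell$ would close up. The paper's argument, drawing on~\cite{BS07}, is sharper: the ends of $C_j$ in $\R\P^k$ land on codimension-$j$ faces of the extended arrangement $\calA^+$ (the $j{-}1$ logarithmic equations, together with genericity of the weights, force $j{-}1$ of the $p_i$ to vanish simultaneously, and the hyperplane at infinity can contribute one more). These end-points are cut out on each codimension-$j$ face by the polynomials $F_i=\Gamma_{k-i}\cdot\bigl(\prod p_\ell\bigr)^{2^i}$, giving a global count of $2^{\binom{k-j}{2}}n^{k-j}\binom{n+k+1}{j}$ points. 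Since $\calA^+$ is generic, exactly $2^j$ chambers of $M_\calA$ meet at a codimension-$j$ face, and $C_j$ has one branch in each; hence $\flat(C_j)\le 2^j\cdot\flat(\widetilde{C}_j)$. This is the missing idea in your proposal.
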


%
%
%
\section{Proof of Theorem~\ref{Thm:ultimate}}

We follow~\cite{BS07} with minor, but
important, modifications.
Perturbing the polynomials $p_i(y)$ and the weights $\beta_j$ will not decrease the 
number of non-degenerate real solutions in $M_\calA$.
This enables us to make the following assumptions.

The arrangement $\calA^+\subset\R\P^k$, where we add the hyperplane at infinity, is
general in that every $j$ hyperplanes of $\calA^+$ meet in a $(k{-}j)$ dimensional linear
subspace, called a \Blue{{\it codimension $j$ face of $\calA$}}.
If $B$ is the matrix whose columns are the weights $\beta_1,\dotsc,\beta_k$, then the
entries of $B$ are rational numbers and no minor of $B$ vanishes.
This last technical condition as well as the freedom to further perturb the $\beta_j$
and the $p_i$ are necessary for the results in~\cite[Section 3]{BS07} upon which we rely.

For functions $f_1,\dotsc,f_j$ on $M_\calA$, let 
$V(f_1,\dotsc,f_j)$ be the subvariety they define.
Suppose that $\beta_j=(b_{1,j},\dotsc,b_{n+k,j})$.
For each $j=1,\dotsc,k$, define
\[
   \psi_j(y)\ :=\ \sum_{i=1}^{n+k} b_{i,j} \log |p_i(y)|\,.
\]
Then~\eqref{Eq:gen_Master} is equivalent to 
$\psi_1(y)=\dotsb=\psi_k(y)=0$. 
Inductively define $\Gamma_k,\Gamma_{k-1},\dotsc,\Gamma_1$ by
\[
   \Gamma_j\ :=\ \mbox{\rm Jac}(\psi_1,\dotsc,\psi_j,\ 
                  \Gamma_{j{+}1},\dotsc,\Gamma_k)\,,
\]
the Jacobian determinant of $\psi_1,\dotsc,\psi_j,\Gamma_{j{+}1},\dotsc,\Gamma_k$.
Set
 \[
     C_j\ :=\  V(\psi_1,\dotsc,\psi_{j-1},\ \Gamma_{j{+}1},\dotsc,\Gamma_k)\,,
 \]
which is a curve in  $M_\calA$.

Let $\flat(C)$ be the number of unbounded components of a curve $C\subset M_\calA$.
We have the estimate from~\cite{BS07}, which is
a consequence of the Khovanskii-Rolle Theorem,
 \begin{equation}\label{Eq:Kh-Ro_estimate}
   |V(\psi_1,\dotsc,\psi_k)|\ \leq\ 
    \flat(C_k)+\dotsb+\flat(C_1)\ +\ 
   |V(\Gamma_1,\dotsc,\Gamma_k)|\,.
 \end{equation}
Here, $|S|$ is the cardinalty of the set $S$.
We estimate these quantities.

\begin{lemma}\label{L:estimate}
  \mbox{\ }
 \begin{enumerate}
  \item[$(1)$]
       $|V(\Gamma_1,\dotsc,\Gamma_k)|\leq  2^{\binom{k}{2}}n^k$.

  \item[$(2)$]
    $C_j$ is a smooth curve and 
\[
    \flat(C_j)\ \ \leq\ \ 
      \frac{1}{2}2^{\binom{k-j}{2}}n^{k-j}\tbinom{n{+}k{+}1}{j}\cdot 2^j
      \ \ \leq\ \ \frac{1}{2}2^{\binom{k}{2}}n^k\cdot\frac{2^{2j-1}}{j!}\,.
\]
 \end{enumerate}
\end{lemma}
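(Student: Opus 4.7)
The plan is to follow \cite{BS07} closely, noting that part~(1) is essentially the same count as in that paper, while part~(2) requires modification to account for the fact that we count unbounded components across \emph{all} sign-chambers of the arrangement $\calA^+$ rather than only those lying in the positive chamber.

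For part~(1), the argument is essentially unchanged from \cite{BS07}: one clears denominators in each $\Gamma_j$ by multiplying by an appropriate power of $\prod_m p_m$, and a careful calculation establishes that the resulting polynomials have degrees whose product is exactly $n^k\cdot 2^{\binom{k}{2}}$. The key structural observation is that the Jacobian of logarithmic derivatives admits a Cauchy--Binet style expansion which causes massive cancellation in the numerator degrees. Bezout's theorem in $\R\P^k$, applied to the cleared system $\Gamma_1 = \cdots = \Gamma_k = 0$, then bounds the total number of complex solutions by this product. Since the solutions we wish to count lie in $M_\calA$, where no $p_m$ vanishes, no spurious solutions from the denominator-clearing process enter, and the bound is valid for real solutions across all of $M_\calA$, not just a single chamber.

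For part~(2), smoothness of $C_j$ follows from the genericity conditions enforced after perturbing the $p_i$ and $\beta_i$, together with the non-vanishing of minors of the weight matrix $B$ from \cite[Section~3]{BS07}. To bound $\flat(C_j)$, I would pass to the closure $\overline{C_j}$ in $\R\P^k$ and observe that each unbounded component of $C_j\subset M_\calA$ contributes two endpoints to the finite intersection of $\overline{C_j}$ with the $n+k+1$ hyperplanes of $\calA^+$, giving
\[
 2\,\flat(C_j) \;\leq\; \sum_{H\in\calA^+}\bigl|\overline{C_j}\cap H\bigr|.
\]
On each hyperplane $H$, the restriction of the $k-1$ equations defining $C_j$ yields an analogous system in the $(k-1)$-dimensional arrangement $\calA^+\cap H$, which contains $n+k$ hyperplanes. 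Iterating this restriction $j$ times and applying the part~(1)-type estimate in dimension $k-j$ yields the factor $2^{\binom{k-j}{2}}n^{k-j}$, with $\binom{n+k+1}{j}$ recording the choices of the successively restricted hyperplanes (after symmetrization). The novel factor $2^j$ counts the $2^j$ sign-chambers into which $\calA^+$ partitions each $(k-j)$-dimensional face---precisely the source of our all-real count, relative to the single-chamber count of \cite{BS07}.

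The second inequality in part~(2) follows from standard estimates on $\binom{n+k+1}{j}$ together with elementary comparisons of the exponents involving $\binom{k-j}{2}$ and $\binom{k}{2}$. I expect the main obstacle to be verifying that the iterated restriction preserves enough genericity---specifically, that each successive sub-arrangement $\calA^+\cap H_{i_1}\cap\cdots\cap H_{i_s}$ remains essential and that the induced weight data continues to satisfy the non-vanishing-of-minors condition---so that the inductive application of the degree bound, and hence the chamber-counting argument, remains valid at every stage. This reduces to invoking the genericity results of \cite[Section~3]{BS07}, appropriately adapted for restriction to arrangement faces, and to confirming that the chamber count $2^j$ is exact rather than an overestimate when the face intersects the original arrangement transversely.
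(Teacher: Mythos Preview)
Your treatment of part~(1) matches the paper: both simply invoke the degree computation and B\'ezout bound from \cite[Lemma~3.4]{BS07}, and you correctly note that this bound already holds on all of $M_\calA$.

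Part~(2) is where your approach diverges from the paper, and it contains a genuine gap. You propose to bound $\flat(C_j)$ by summing $|\overline{C_j}\cap H|$ over hyperplanes $H\in\calA^+$, then restrict the $k{-}1$ defining equations of $C_j$ to $H$ to obtain an ``analogous system'' in dimension $k{-}1$, and iterate $j$ times. But the equations $\psi_1,\dotsc,\psi_{j-1}$ are sums of terms $b_{m,i}\log|p_m(y)|$, which diverge on each hyperplane $p_m=0$; they do not restrict to give a well-defined system on $H$, so the iteration is not defined. The paper avoids this entirely: it shows (following \cite{BS07}) that the endpoints of unbounded components of $C_j$ lie directly on codimension-$j$ faces of $\calA^+$, not merely on hyperplanes, and that on each such face they are cut out by the denominator-cleared polynomials $F_i=\Gamma_{k-i}\cdot\bigl(\prod_m p_m\bigr)^{2^i}$ for $i=0,\dotsc,k{-}j{-}1$. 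B\'ezout on the face then gives the factor $2^{\binom{k-j}{2}}n^{k-j}$, and $\tbinom{n+k+1}{j}$ is simply the number of codimension-$j$ faces.

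Your interpretation of the factor $2^j$ is also off. It does not count pieces into which the arrangement partitions a $(k{-}j)$-dimensional face; rather, at each endpoint (a generic point of a codimension-$j$ face) there are exactly $2^j$ full-dimensional chambers of $M_\calA$ locally incident, and the curve $C_j$ has one branch in each of them. Thus each endpoint on a face is hit by $2^j$ branches of $C_j$, and dividing the total branch count by $2$ gives the bound on $\flat(C_j)$. This is the step that distinguishes the all-chambers bound from the single-chamber bound in \cite{BS07}.
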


\noindent{\it Proof of Theorem~$\ref{Thm:ultimate}$.}
  By~\eqref{Eq:Kh-Ro_estimate} and  Lemma~\ref{L:estimate},
  we have
\[
  |V(\psi_1,\dotsc,\psi_k)|\ \ \leq\ \ 
  2^{\binom{k}{2}}n^k \Bigl( 1+ \frac{1}{4}\sum_{j=1}^k \frac{4^j}{j!}\Bigr)
  \ \ <\  \  2^{\binom{k}{2}}n^k \cdot \frac{e^4+3}{4}\,.
   \eqno{\Box}\bigskip
\]


\begin{proof}[Proof of Lemma~$\ref{L:estimate}$]
 The bound (1) is from Lemma~3.4 of~\cite{BS07}.
 Statements analogous to (2) for $\widetilde{C}_j$, the restriction of $C_j$
 to a single chamber (connected component) of $M_\calA$, were established in 
 Lemma~3.4 and the proof of Lemma~3.5 in~\cite{BS07}:
 \begin{equation}\label{Eq:tildeC}
    \flat(\widetilde{C}_j)\ \ \leq\ \  
       \frac{1}{2}2^{\binom{k-j}{2}}n^{k-j}\tbinom{n{+}k{+}1}{j}
        \ \leq\ \ \frac{1}{2}2^{\binom{k}{2}}n^k\cdot\frac{2^{j-1}}{j!}\ .
 \end{equation}
 The bound we claim for $\flat(C_j)$ has an extra factor of $2^j$.
 {\it A priori} we would expect to multiply this bound~\eqref{Eq:tildeC} by the number
 of chambers of $M_\calA$ to obtain a bound for $\flat(C_j)$, but
 the correct factor is only $2^j$.

 We work in $\R\P^k$ and use the extended hyperplane arrangement 
 $\calA^+$, as we will need points in the closure of $C_j$ in  $\R\P^k$.
 The first inequality in~\eqref{Eq:tildeC} for $\flat(\widetilde{C}_j)$ arises as each
 unbounded component of $\widetilde{C}_j$ meets $\calA^+$ in two distinct points
 (this accounts for the factor $\frac{1}{2}$) which are points
 of codimension $j$ faces where the polynomials
\[
  F_i(y)\ :=\ \Gamma_{k-i}(y)\cdot \Bigl(\prod_{i=1}^{n{+}k} p_i(y)\Bigr)^{2^i}
\]
 for $i=0,\dotsc,k-j-1$ vanish.
 (By Lemma 3.4(1) of~\cite{BS07}, $F_i$ is a polynomial of degree $2^in$.)
 The genericity of the weights and the linear polynomials $p_i(y)$ 
 imply that these points will lie on faces of codimension $j$ but not
 of higher codimension.
 The factor $2^{\binom{k-j}{2}}n^{k-j}$ is the B\'ezout number of the 
 system $F_0=\dotsb=F_{k-j-1}$ on a given codimension $j$ plane, and 
 there are exactly $\binom{n{+}k{+}1}{j}$ codimension $j$ faces of $\calA^+$.

 At each of these points, $C_j$ will have one branch in each chamber of 
 $M_\calA$ incident on that point.
 Since the hyperplane arrangement $\calA^+$ is general there will be exactly 
 $2^j$ such chambers.
\end{proof}

\providecommand{\bysame}{\leavevmode\hbox to3em{\hrulefill}\thinspace}
\providecommand{\MR}{\relax\ifhmode\unskip\space\fi MR }
\providecommand{\MRhref}[2]{%
  \href{http://www.ams.org/mathscinet-getitem?mr=#1}{#2}
}
\providecommand{\href}[2]{#2}

\bibliographystyle{amsplain}
\bibliography{bibl}

\begin{thebibliography}{1}

\bibitem{BaSo}
D.J. Bates and F.~Sottile, \emph{Khovanskii-{R}olle continuation for real
  solutions}, 2007, in preparation.

\bibitem{BKK}
D.~Bernstein, A.~Kouchnirenko, and A.~Khovanskii, \emph{Newton polytopes}, Usp.
  Math. Nauk. \textbf{31} (1976), no.~3, 201--202, (in Russian).

\bibitem{BBS}
B.~Bertrand, F.~Bihan, and F.~Sottile, \emph{Polynomial systems with few real
  zeroes}, Math. Z. \textbf{253} (2006), no.~2, 361--385.

\bibitem{Bi07}
F.~Bihan, \emph{Polynomial systems supported on circuits and dessins
  d'enfants}, Journal of the London Mathematical Society \textbf{75} (2007),
  no.~1, 116--132.

\bibitem{BRS07a}
F.~Bihan, J.M. Rojas, and F.~Sottile, \emph{Sharpness of fewnomial bounds and
  the number of components of a fewnomial hypersurface}, Algorithms in
  Algebraic Geometry (Alicia Dickenstein, Frank-Olaf Schreyer, and Andrew~J.
  Sommese, eds.), IMA Volumes in Mathematics and its Applications, vol. 146,
  Springer New York, 2007, pp.~15--20.

\bibitem{BS_Gale}
F.~Bihan and F.~Sottile, \emph{Gale duality for complete intersections}, 2007,
  Annales de l'Institut Fourier, to appear.

\bibitem{BS07}
\bysame, \emph{New fewnomial upper bounds from {G}ale dual polynomial systems},
  Moscow Mathematical Journal \textbf{7} (2007), no.~3, 387--407.

\bibitem{Kh80}
A.G. Khovanskii, \emph{A class of systems of transcendental equations}, Dokl.
  Akad. Nauk. SSSR \textbf{255} (1980), no.~4, 804--807.

\end{thebibliography}
 
\end{document}